\newcommand{\bi}{\begin{itemize}}  
\newcommand{\ei}{\end{itemize}}     
\newcommand{\bc}{\begin{center}}  
\newcommand{\ec}{\end{center}}     
\newcommand{\ls}[1]
   {\dimen0=\fontdimen6\the\font \lineskip=#1\dimen0
   \advance\lineskip.5\fontdimen5\the\font \advance\lineskip-\dimen0
   \lineskiplimit=.9\lineskip \baselineskip=\lineskip
   \advance\baselineskip\dimen0 \normallineskip\lineskip
   \normallineskiplimit\lineskiplimit \normalbaselineskip\baselineskip
   \ignorespaces }
\numberwithin{equation}{section}
\newcommand{\slim} {\mathop{\rm lim\,sup}}
\newcommand{\ilim} {\mathop{\rm lim\,inf}}
\newtheorem{lemma}{Lemma}[section]
\newtheorem{theorem}[lemma]{Theorem}
\newtheorem{corollary}[lemma]{Corollary}
\newtheorem{example}[lemma]{Example}
\newtheorem{remark}[lemma]{Remark}
\def\R{\mathbb{R}}
\def\P{\mathbb{P}}
\def\B{\mathcal{B}}
\title{Fatou's Lemma for Weakly Converging Probabilities}
\begin{document}


\date{}
\maketitle

\begin{center}
  Eugene~A.~Feinberg \footnote{Department of Applied Mathematics and
Statistics,
 Stony Brook University,
Stony Brook, NY 11794-3600, USA, eugene.feinberg@sunysb.edu}, \
Pavlo~O.~Kasyanov\footnote{Institute for Applied System Analysis,
National Technical University of Ukraine ``Kyiv Polytechnic
Institute'', Peremogy ave., 37, build, 35, 03056, Kyiv, Ukraine,\
kasyanov@i.ua.}, and Nina~V.~Zadoianchuk\footnote{Institute for
Applied System Analysis, National Technical University of Ukraine
``Kyiv Polytechnic Institute'', Peremogy ave., 37, build, 35,
03056, Kyiv, Ukraine,\
ninellll@i.ua.} \\

\bigskip
Submitted on May 17, 2012; revised on March 25, 2013; April 29, 2013
\end{center}

\begin{abstract}
Fatou's lemma states under appropriate conditions that the
integral of the lower limit of a sequence of functions is not
greater than the lower limit of the integrals.  This note
describes similar inequalities when, instead of a single measure,
the functions are integrated with respect to different measures
that form a weakly convergent sequence.
\end{abstract}

\sloppy \large

\section{The Inequality for Nonnegative Functions}\label{s2}
Consider a measurable space $(S,\cal B)$, where $S$ is a metric
space and ${\mathcal B}$ is its Borel $\sigma$-field. Let $\P(S)$
be the set of probability measures on $(S,{\mathcal B}(S)).$
According to Fatou's lemma, Shiryaev~\cite{Sh}, for any $\mu\in
\P(S)$ and for any sequence of nonnegative measurable functions
$f_1,f_2,\ldots$
\begin{equation}\label{eq2}\int_S \liminf_{n\to+\infty}f_n(s)\mu(ds)\le \liminf_{n\to+\infty}\int_S f_n(s)\mu(ds).
\end{equation}

A sequence of probability measures $\{\mu_n\}_{n\ge 1}$ from
$\P(S)$ converges weakly to $\mu\in\P(S)$ if for any bounded
continuous function $f$ on $S$
\begin{equation}\label{eq1}\int_S f(s)\mu_n(ds)\to \int_S f(s)\mu(ds) \qquad {\rm as
\quad }n\to+\infty.
\end{equation}
A sequence of probability measures $\{\mu_n\}$ from $\P(S)$
converges setwise to $\mu\in\P(S)$ if (\ref{eq1}) holds for any
bounded measurable function $f$. If $\{\mu_n\}_{n\ge 1} \subset
\mathbb{P}(S)$ converges  setwise to $\mu\in \mathbb{P}(S)$,
according to Royden \cite[p. 231]{Ro}, for any sequence of
nonnegative measurable function $f_1,f_2,\ldots$
\begin{equation}\label{eq3}\int_S \liminf_{n\to+\infty}f_n(s)\mu(ds)\le \liminf_{n\to+\infty}\int_S f_n(s)\mu_n(ds).
\end{equation}
However, this is not true, if $\mu_1,\mu_2,\ldots$ converge weakly
to $\mu$.

Indeed, let $S=[0,1]$, $\mu_n(A)={\bf I}\{1/n\in A\},$
$\mu(A)={\bf I}\{0\in A\}$ for $A\in{\cal B}([0,1])$, and
$f(s)=f_n(s)={\bf I}\{s=0 \}$ for $n=1,2,\ldots $ and $s\in
[0,1].$ Then $\int_S f(s)\mu(ds)=1$, $\int_S f(s)\mu_n(ds)=0$, and
(\ref{eq3}) does not hold.

Theorem~\ref{lemma2} presents Fautou's lemma for weakly converging
measures $\mu_n$ and nonnegative functions $f_n$.  This fact is
useful fact for the analysis of Markov decision processes and
stochastic games.  Serfozo~\cite[Lemma 3.2]{Serfozo} establishes
inequality (\ref{eq3.1}) for a vaguelly convergent sequence of
measures on a locally compact metric space $S$ and for nonnegative
functions $f_n.$ In its current form, Theorem~\ref{lemma2} is
formulated in Sch\"al~ \cite[Lemma 2.3(ii)]{Schal} without proof,
in Jaskiewicz and Nowak~\cite[Lemma 3.2]{in} with short
explanations on how the proof from Serfozo~\cite[Lemma
3.2]{Serfozo} can be adapted to weak convergence on metric spaces,
and in Feinberg, Kasyanov, and Zadoianchuk~\cite[Lemma 4]{FKZ}
with a proof.  To make this note logically complete, we provide
the proof of Theorem~\ref{lemma2} in Section~\ref{s3} below. The
provided proof is shorter and simpler than the proof in
\cite{FKZ}. Theorem~\ref{teor3} below extends Theorem~\ref{lemma2}
to functions $f_n$ that can be unbounded from below. Lemma~3.3 in
Jaskiewicz and Nowak~\cite{in} is a particular version of such a
result developed for particular applications in that paper.  Let
$\overline{\mathbb{R}}=[-\infty,+\infty]$.


\begin{theorem}\label{lemma2}
Let $S$ be an arbitrary metric space, $\{\mu_n\}_{n\ge 1} \subset
\mathbb{P}(S)$ converge  weakly to $\mu\in \mathbb{P}(S)$, and
$\{f_n\}_{n\ge 1}$ be a sequence of measurable nonnegative
$\overline{\mathbb{R}}$-valued functions on $S$. Then
\begin{equation}\label{eq3.1}
\int_S \ilim\limits_{n\to+\infty,\, s'\to
s}f_n(s')\mu(ds)\le \ilim\limits_{n\to +\infty}\int_S
f_n(s)\mu_n(ds).
\end{equation}
\end{theorem}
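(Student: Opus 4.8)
The plan is to reduce the statement to an application of the portmanteau theorem plus the monotone convergence theorem, via a standard truncation argument. First I would introduce, for each $s\in S$, the auxiliary function
\[
g(s):=\ilim_{n\to+\infty,\,s'\to s}f_n(s')=\sup_{k\ge 1}\,\inf_{\substack{n\ge k\\ d(s',s)<1/k}}f_n(s'),
\]
and observe that $g$ is measurable (it is a countable supremum of functions each of which, being an infimum over an open ball and a tail of indices, is lower semicontinuous hence Borel), and that $g\ge 0$. The left-hand side of \eqref{eq3.1} is $\int_S g\,d\mu$, and by the monotone convergence theorem it suffices to prove, for every bounded Lipschitz (or just bounded continuous) function $h$ with $0\le h\le g$, that
\[
\int_S h(s)\,\mu(ds)\le \ilim_{n\to+\infty}\int_S f_n(s)\,\mu_n(ds),
\]
since $g$ is the pointwise supremum of an increasing sequence of such functions (truncate $g$ at level $m$ and regularize; alternatively approximate from below by bounded continuous functions using that $S$ is metric).

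The core step is the inequality displayed above for a fixed bounded continuous $h$ with $0\le h\le g$. Fix $\varepsilon>0$. For each $s$, since $h(s)\le g(s)$, the definition of $g$ as a $\liminf$ in the joint variable $(n,s')$ gives a $k(s)$ such that $f_n(s')>h(s)-\varepsilon$ whenever $n\ge k(s)$ and $d(s',s)<1/k(s)$. I would then want to say: for $n$ large, $f_n(s)\ge h(s)-\varepsilon$ on a set of $\mu_n$-measure close to $1$. This is where the weak convergence has to be exploited carefully — the sets where the bound holds depend on $n$, so one cannot directly integrate a fixed function. The clean way is to pass to a subsequence along which $\int f_n\,d\mu_n$ converges to the $\liminf$, and then to define, for each fixed $m$, the \emph{closed} set (or a slightly enlarged open set)
\[
A_{n,m}:=\{s\in S:\ f_j(s)\ge h(s)-\varepsilon\ \text{for all }j\ge n\ \text{with}\ \dots\},
\]
but a more economical route avoids explicit sets: use that $\min\{f_n(s),h(s)-\varepsilon\}\cdot \mathbf{1}\{\text{local condition at }s\}$ is dominated by $f_n$, and let $n\to\infty$. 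Concretely, the obstacle is to show
\[
\limsup_{n\to+\infty}\mu_n\bigl(\{s:\ f_n(s)<h(s)-\varepsilon\}\bigr)=0
\]
is \emph{not} what we need; rather we need a lower-semicontinuity-type statement for the integrals, and the right tool is: the function $\underline h(s):=\ilim_{n\to+\infty,\,s'\to s}\min\{f_n(s'),h(s')\}$ is lower semicontinuous and equals $h$ (since $h$ is continuous and $h\le g$), and for lower semicontinuous nonnegative $\ell$, weak convergence gives $\int \ell\,d\mu\le\liminf\int \ell\,d\mu_n$ only when $\ell$ is also bounded below and we use the portmanteau characterization — but here the mismatch between the index $n$ inside the $\liminf$ and the measure $\mu_n$ is exactly the point of the theorem and cannot be circumvented by portmanteau alone.

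Therefore the genuine argument, which I expect to be the technical heart, runs as follows. Pass to a subsequence (not relabeled) realizing $\liminf_n\int f_n\,d\mu_n=\lim_n\int f_n\,d\mu_n$. Fix a bounded continuous $h$ with $0\le h\le g$, fix $\varepsilon>0$, and for each integer $N$ set
\[
h_N(s):=\inf_{\substack{n\ge N\\ d(s',s)\le 1/N}}\min\{f_n(s'),\,\|h\|_\infty\}.
\]
Then $h_N$ is lower semicontinuous (infimum over a neighbourhood destroys upper semicontinuity but an inf of u.s.c.\ pieces over a ball is l.s.c.\ — one checks this directly), $0\le h_N\le \|h\|_\infty$, the sequence $h_N$ is nondecreasing in $N$, and $\sup_N h_N(s)=\min\{g(s),\|h\|_\infty\}\ge h(s)$. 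For $n\ge N$ we have $f_n(s)\ge h_N(s)$ pointwise, hence $\int_S f_n\,d\mu_n\ge\int_S h_N\,d\mu_n$; letting $n\to+\infty$ and using that $h_N$ is a bounded l.s.c.\ function together with the portmanteau theorem ($\liminf_n\int \ell\,d\mu_n\ge\int \ell\,d\mu$ for bounded l.s.c.\ $\ell$ under weak convergence) gives
\[
\lim_{n\to+\infty}\int_S f_n\,d\mu_n\ \ge\ \int_S h_N(s)\,\mu(ds).
\]
Finally let $N\to+\infty$ and apply the monotone convergence theorem to the left-hand side of \eqref{eq3.1} restricted by $h$: $\int_S h_N\,d\mu\uparrow\int_S\min\{g,\|h\|_\infty\}\,d\mu\ge\int_S h\,d\mu$. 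Taking the supremum over all bounded continuous $h\le g$ (equivalently, over truncations $h=\min\{g,m\}$ followed by continuous approximation, using monotone convergence once more) yields $\int_S g\,d\mu\le\liminf_n\int_S f_n\,d\mu_n$, which is \eqref{eq3.1}. The main obstacle is verifying that $h_N$ is lower semicontinuous and that $\sup_N h_N=\min\{g,\|h\|_\infty\}$ — i.e.\ correctly matching the joint $(n,s')$ double-limit in the definition of $g$ with the nested infima defining $h_N$; once that bookkeeping is done, portmanteau and monotone convergence finish the argument immediately, and the truncation $\min\{f_n,\|h\|_\infty\}$ is essential to stay in the bounded regime where portmanteau applies.
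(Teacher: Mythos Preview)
Your overall architecture is exactly the one the paper uses: build an increasing sequence of bounded lower semicontinuous minorants of the $f_n$'s, apply the portmanteau inequality $\int \ell\,d\mu\le\liminf_m\int\ell\,d\mu_m$ for bounded l.s.c.\ $\ell$, then let the minorants increase to the joint lower limit via monotone convergence, and finally remove the truncation. The detour through an auxiliary bounded continuous $h\le g$ is unnecessary (the paper simply truncates at a level $\lambda$), but that is harmless.

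The genuine gap is the lower semicontinuity of your $h_N$. Your function
\[
h_N(s)=\inf_{\substack{n\ge N\\ d(s',s)\le 1/N}}\min\{f_n(s'),\,\|h\|_\infty\}
\]
is in general \emph{upper} semicontinuous, not lower semicontinuous: taking an infimum over a ball around $s$ produces u.s.c.\ dependence on $s$, because any near-minimizer $s'$ for the ball at $s$ eventually lies in the ball at every nearby point. Concretely, with $S=\R$ and $f_n(s)=\mathbf{1}_{[0,\infty)}(s)$ for all $n$, one gets $h_N=\mathbf{1}_{[1/N,\infty)}$, which fails l.s.c.\ at $1/N$. Your parenthetical (``an inf of u.s.c.\ pieces over a ball is l.s.c.'') has the direction reversed, and the same confusion appears earlier when you assert that $s\mapsto\inf_{n\ge k,\,d(s',s)<1/k}f_n(s')$ is l.s.c. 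Since portmanteau for u.s.c.\ functions gives the inequality in the wrong direction, your key step $\liminf_m\int h_N\,d\mu_m\ge\int h_N\,d\mu$ is unjustified.

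The repair is exactly what the paper does: work with the \emph{lower semicontinuous envelope}. Set $F_N(s)=\inf_{m\ge N}f_m(s)$ (or its truncation) and use $\underline{F}_N(s)=\liminf_{s'\to s}F_N(s')$, which is automatically l.s.c., satisfies $\underline{F}_N\le F_N\le f_m$ for $m\ge N$, and increases to $\liminf_{n\to\infty,\,s'\to s}f_n(s')$. Once you substitute $\underline{F}_N$ for your $h_N$, the rest of your argument goes through verbatim and coincides with the paper's proof.
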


We remark that, if $f_n(s)=f(s)$, $n=1,2,\ldots,$ and the function
$f$ is nonnegative and lower semicontinuous then
$\ilim\limits_{n\to+\infty,\, s'\to
s}f_n(s')=f(s)$ and Theorem~\ref{lemma2} implies that
\begin{equation}\label{eq:J0}
\int_S f(s)\mu(ds)\le \ilim\limits_{n\to +\infty}\int_S
f(s)\mu_n(ds),
\end{equation}
if $\mu_n$ converges weakly to $\mu;$ see Billingsley
\cite[problem 7, Chapter 1, $\S$2]{Bil}, where this fact is stated
for a bounded lower semicontinuous $f$.

Further, for any $\overline{\R}$-valued function $u$ on $S$ we denote
\[
\underline{u}(s)= \ilim_{s'\to s}u(s'), \quad \overline{u}(s)=\slim_{s'\to s}u(s'),\quad s\in S.
\]

Theorem~\ref{teor3} below provides the extended version of Theorem~\ref{lemma2} for unbounded below functions.

\section{Proof of Theorem~\ref{lemma2}}

\begin{proof}
First, we prove the lemma for uniformly bounded above
functions $f_n$.  Let  $f_n(s)\le K<+\infty$ for all $n= 1,2,...$
and all $s\in S$. For $n=1,2,\ldots$ and $s\in S$, define
$F_n(s)=\inf\limits_{m\ge n} f_m(s)$.
The functions $\underline{F}_n:S\to [0,+\infty]$, $n=1,2,\ldots,$
are lower semi-continuous; see,  Berberian \cite[Lemma~5.13.4]{Berb}. 
In addition, for $s\in S$
\begin{equation}\label{equkb0}\underline{F}_n(s)\uparrow \ilim\limits_{n\to+\infty,\, s'\to
s}f_n(s')\qquad {\rm
as\quad
}n\to+\infty.
\end{equation}
By the monotone convergence theorem, 
\begin{equation}\label{eq:J1}
\int\limits_S \ilim\limits_{n\to+\infty,\, s'\to
s}f_n(s')\mu(ds)= \lim\limits_{n\to+\infty}\int\limits_S \underline{F}_n(s)\mu(ds).
\end{equation}
Since the function $\underline{F}_n$, $n=1,2,\dots,$ is lower semi-continuous on $S$ and bounded below and $\mu_m$ converges weakly to $\mu$ as $m\to+\infty$, then formula (\ref{eq:J0}) provides
\begin{equation}\label{eq:J2}
 \int\limits_S \underline{F}_n(s)\mu(ds)\le \liminf_{m\to+\infty}\int\limits_S \underline{F}_n(s)\mu_m(ds),\quad n=1,2,\dots.
\end{equation}
Because of $\underline{F}_n$ is monotonically nondecreasing by $n=1,2,\dots$, then
\begin{equation}\label{eq:J3}
\liminf_{m\to+\infty}\int\limits_S \underline{F}_n(s)\mu_m(ds)\le \liminf_{m\to+\infty}\int\limits_S \underline{F}_m(s)\mu_m(ds),\quad n=1,2,\dots.
\end{equation}
Formulas (\ref{eq:J1})--(\ref{eq:J3}) provide necessary inequality (\ref{eq3.1}).

Thus Theorem~\ref{lemma2} is proved for uniformly bounded functions $f_n$. Consider a sequence $\{f_n\}_{n\ge 1}$ of
measurable nonnegative $\overline{\mathbb{R}}$-valued functions on
$S$. For  $\lambda>0$  set
$f_n^{\lambda}(s):=\min\{f_n(s),\lambda\}$, $s\in S$, $n=
1,2,\ldots\ $. Since the functions $f_n^{\lambda}$ are uniformly
bounded above,
\[
\int_S \ilim\limits_{n\to+\infty,\,
s'\to s}f_n^{\lambda}(s')\mu(ds)\le \ilim\limits_{n\to
+\infty}\int_S f_n^{\lambda}(s)\mu_n(ds)\le \ilim\limits_{n\to
+\infty}\int_S f_n(s)\mu_n(ds).
\]
Then, using Fatou's lemma,
\[
\int_S \ilim\limits_{n\to+\infty,\,
s'\to s}f_n(s') \mu(ds)\le
\ilim\limits_{\lambda\to+\infty} \int_S \ilim\limits_{n\to+\infty,\,
s'\to s}f_n^{\lambda}(s')
\mu(ds).
\]
\end{proof}

\section{A Counterexample for Functions Unbounded
Below}\label{s3} A suitable assumption concerning the negative
parts of the sequence $f_1$, $f_2$, ... of functions is necessary
for Fatou's lemma for weakly converging probabilities as well as
for setwise converging probabilities, as the following example
shows.

\begin{example}\label{exa1} {\rm
The sequence of probability measures $\{\mu_n\}_{n\ge 1}$
converges setwise (and therefore converges weakly) to a
probability measure $\mu$ from $\P(S)$, real function $f:S\to \R$
is continuous,
\[
\int |f(s)|\mu(ds), \ \int |f(s)|\mu_n(ds)<+\infty,\quad n\ge1,
\]
and
\[
\int f(s)\mu(ds)>\lim\limits_{n\to+\infty}\int f(s)\mu_n(ds).
\]

Let $S$ denote the semiinterval $(0,1]$ with the Borel
$\sigma$-field $\B(S)$. For every natural number $n$ define
probability measure
\[
\mu_n(A)=\sqrt{n}\lambda\left(A\cap\left[\frac1{2n},\frac1n\right]\right)+\left(2-\frac{1}{\sqrt{n}}\right)
\lambda\left(A\cap\left[\frac12,1\right]\right),\quad A\in\B(S),
\]
where $\lambda$ is the Lebesgue measure on $(0,1]$. Define also
continuous on $S$ real function $f(s)=-s^{-1}$. The sequence of
probability measures $\{\mu_n\}_{n\ge 1}$ converges setwise (and
therefore converges weakly) to the probability measure $\mu$ from
$\P(S)$, where
$\mu(A)=2\lambda\left(A\cap\left[\frac12,1\right]\right),$
$A\in\B(S),$  and
\[
\int f(s)\mu(ds)=-2\ln(2),\quad \int
f(s)\mu_n(ds)=-\ln(2)\left(\sqrt{n}+2-\frac{1}{\sqrt{n}}\right),\quad
n\ge 1.
\]
Thus}
\[
\int f(s)\mu(ds)>\lim\limits_{n\to+\infty}\int
f(s)\mu_n(ds)=-\infty.
\]
\end{example}
\begin{remark}
If we set  $f(s)=s^{-1}$ for $s\in (0,1]$, $n\ge 1$, in
example~\ref{exa1}, then inequalities  (\ref{eq3}) and
(\ref{eq3.1}) are strict.
\end{remark}

\section{Extensions and Variations} 

In the rest of this paper, we deal with integrals of functions
that can take negative values.  An integral $\int_Sf (s)\mu(ds)$
of a measurable $\overline{\R}$-valued function $f$ on $S$ with
respect to a probability measure $\mu \in \P(S)$ is defined if
\begin{equation}\label{e:condint} \min\{ \int_Sf^+(s)\mu(ds),
\int_Sf^-(s)\mu(ds)\}< +\infty, \end{equation} where
$f^+(s)=\max\{f(s),0\}$, $f^-(s)=-\min\{f(s),0\}$, $s\in S$.  If
\eqref{e:condint} holds then the integral  is defined as \[\int_Sf
(s)\mu(ds)=\int_Sf^+ (s)\mu(ds)- \int_Sf^- (s)\mu(ds).\] All the
integrals in the assumptions of the following theorems and
corollary are assumed to be defined. For example, by writing
\eqref{eq:sw1} in Theorem~\ref{teor2}, we assume that the
integrals are defined for the functions $g_n(s),$ $n\ge 1$, and
$\limsup_{n\to+\infty}g_n(s)$. 

The following statement is a generalization of (\ref{eq3}) to
functions that can take negative values.

\begin{theorem}\label{teor2}
Let $\{\mu_n\}_{n\ge 1} \subset \mathbb{P}(S)$ converge  setwise
to $\mu\in \mathbb{P}(S)$ and let  $\{f_n\}_{n\ge 1}$ be a
sequence of measurable $\overline{\R}$-valued functions defined on
$(S,\B(S))$. Then inequality (\ref{eq3}) holds, if all the
integrals in (\ref{eq3}) are defined and there exists a sequence
of measurable ${\R}$-valued functions $\{g_n\}_{n\ge 1}$ on $S$
such that $f_n(s)\ge g_n(s)$, for all $n\ge 1$ and for all $s\in
S$, and
\begin{equation}\label{eq:sw1}
-\infty<\int_S \limsup_{n\to+\infty}g_n(s)\mu(ds)\le\liminf_{n\to+\infty}\int_S
g_n(s)\mu_n(ds).
\end{equation}
\end{theorem}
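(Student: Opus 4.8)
The plan is to reduce the statement to the nonnegative case covered by (\ref{eq3}) by subtracting the pointwise lower bounds $g_n$. Write $g(s):=\limsup_{n\to+\infty}g_n(s)$ and $L:=\liminf_{n\to+\infty}\int_S f_n\,d\mu_n$. First I would dispose of the trivial situations. If $L=+\infty$ there is nothing to prove. The value $L=-\infty$ cannot occur: since $f_n\ge g_n$, monotonicity of the integral gives $\int_S f_n\,d\mu_n\ge\int_S g_n\,d\mu_n$, hence $L\ge M$, where $M:=\liminf_{n\to+\infty}\int_S g_n\,d\mu_n\ge\int_S g\,d\mu>-\infty$ by (\ref{eq:sw1}). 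The same chain shows $\int_S g\,d\mu\le M\le L$, so under the remaining assumption $L\in\R$ both $M$ and $\int_S g\,d\mu$ are finite; in particular $g$ is $\mu$-almost everywhere finite.

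Next, put $h_n:=f_n-g_n$. Because each $g_n$ is real-valued, $h_n$ is a well-defined measurable nonnegative $\overline{\R}$-valued function, so (\ref{eq3}) applies to the sequence $\{h_n\}$ and the setwise convergence $\mu_n\to\mu$, yielding $\int_S\liminf_{n}h_n\,d\mu\le\liminf_{n}\int_S h_n\,d\mu_n$. To evaluate the right-hand side I would choose a subsequence $\{n_k\}$ along which $\int_S f_{n_k}\,d\mu_{n_k}\to L$ and, passing to a further subsequence (still denoted $\{n_k\}$), along which $\int_S g_{n_k}\,d\mu_{n_k}$ converges; its limit $M'$ is necessarily finite and lies in $[M,L]$, since for large $k$ these integrals sit between $M-1$ and $\int_S f_{n_k}\,d\mu_{n_k}<L+1$. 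Then $f_{n_k}$ and $g_{n_k}$ are $\mu_{n_k}$-integrable for large $k$, so $\int_S h_{n_k}\,d\mu_{n_k}=\int_S f_{n_k}\,d\mu_{n_k}-\int_S g_{n_k}\,d\mu_{n_k}\to L-M'$, whence $\liminf_{n}\int_S h_n\,d\mu_n\le L-M'$.

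For the left-hand side, at every $s$ with $g(s)$ finite---that is, $\mu$-a.e.---one has $\liminf_{n}h_n(s)=\liminf_{n}\bigl(f_n(s)-g_n(s)\bigr)\ge\liminf_{n}f_n(s)-\limsup_{n}g_n(s)$, using only that $\limsup_{n}g_n(s)$ is finite. Integrating this inequality and using the finiteness of $\int_S g\,d\mu$ to split the integral gives
\[
\int_S\liminf_{n\to+\infty}f_n(s)\,\mu(ds)-\int_S g(s)\,\mu(ds)\;\le\;\int_S\liminf_{n\to+\infty}h_n(s)\,\mu(ds)\;\le\;L-M'.
\]
Since $M'\ge M\ge\int_S g\,d\mu$ by (\ref{eq:sw1}), rearranging yields $\int_S\liminf_{n}f_n\,d\mu\le L-\bigl(M'-\int_S g\,d\mu\bigr)\le L$, which is exactly (\ref{eq3}).

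The main obstacle is the handling of infinite values. The substitution $h_n=f_n-g_n$ is harmless pointwise because $g_n$ is finite, but the identity $\int_S h_{n_k}\,d\mu_{n_k}=\int_S f_{n_k}\,d\mu_{n_k}-\int_S g_{n_k}\,d\mu_{n_k}$ is legitimate only once all three integrals are finite, which is precisely why one must pass to a subsequence along which $\int_S f_{n_k}\,d\mu_{n_k}$ and $\int_S g_{n_k}\,d\mu_{n_k}$ both converge to finite limits; likewise, the a.e.\ finiteness of $\limsup_{n}g_n$ and the validity of splitting $\int_S\bigl(\liminf_{n}f_n-g\bigr)\,d\mu$ rest on the standing hypothesis that the integrals in (\ref{eq3}) are defined, together with the strict left inequality $-\infty<\int_S\limsup_{n}g_n\,d\mu$ in (\ref{eq:sw1}).
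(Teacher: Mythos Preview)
Your proof is correct and follows essentially the same strategy as the paper: reduce to the nonnegative case by setting $h_n=f_n-g_n$, apply (\ref{eq3}) to $\{h_n\}$, and combine with the pointwise inequality $\liminf_n f_n-\limsup_n g_n\le\liminf_n h_n$ and the finiteness of $\int_S\limsup_n g_n\,d\mu$. The only notable difference is in bounding $\liminf_n\int_S h_n\,d\mu_n$: the paper uses the general inequality $\liminf_n(a_n-b_n)\le\liminf_n a_n-\liminf_n b_n$ directly (valid because $\liminf_n\int_S g_n\,d\mu_n$ is finite), whereas you extract a subsequence along which both $\int_S f_{n_k}\,d\mu_{n_k}$ and $\int_S g_{n_k}\,d\mu_{n_k}$ converge to finite limits; your route is slightly more explicit about when the splitting $\int_S h_n\,d\mu_n=\int_S f_n\,d\mu_n-\int_S g_n\,d\mu_n$ is legitimate, but the two arguments are equivalent in substance.
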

\begin{proof}
  If at least one of the  inequalities
\begin{equation}\label{eq:fl}
\liminf_{n\to+\infty}\int_S f_n(s)\mu_n(ds)<+\infty,\quad \quad
-\infty<\int_S \liminf_{n\to+\infty}f_n(s)\mu(ds)
\end{equation}
is violated then inequality (\ref{eq3}) holds.  So, we assume
\eqref{eq:fl}. The  left inequality in \eqref{eq:fl} implies
\begin{equation}\label{eq:flg}
\liminf_{n\to+\infty}\int_S g_n(s)\mu_n(ds)<+\infty.
\end{equation}

Let us apply Fatou's lemma for setwise converging probabilities
(see (\ref{eq3})) to the  sequence $\{f_n - g_n\}_{n\ge 1}$ of
nonnegative $\overline{\R}$-valued measurable functions on $S$.
Then
\begin{equation}\label{eq:fl1}
\int_S \liminf_{n\to+\infty}(f_n(s)-g_n(s))\mu(ds)\le \liminf_{n\to+\infty}\int_S (f_n(s)-g_n(s))\mu_n(ds).
\end{equation}

Inequalities (\ref{eq:sw1}) and (\ref{eq:flg}) imply
\begin{equation}\label{eq1gn} -\infty <\int_S\limsup_{n\to+\infty}g_n(s)\mu(ds)<+\infty. \end{equation}
 In view of \eqref{eq1gn}
  and the right inequality in \eqref{eq:fl},
\begin{equation}\label{eq:fl1EF1}
\liminf_{n\to+\infty}f_n(s)-\limsup_{n\to+\infty}g_n(s)\le
\liminf_{n\to+\infty}(f_n(s)-g_n(s))\quad \mu(ds)\mbox{-a.s.},
\end{equation}
and
\begin{equation}\label{eq:fl1EF}
\int_S \liminf_{n\to+\infty}f_n(s)\mu(ds)-\int_S
\limsup_{n\to+\infty}g_n(s)\mu(ds)\le\int_S
\liminf_{n\to+\infty}(f_n(s)-g_n(s))\mu(ds).
\end{equation}
The following inequalities and \eqref{eq1gn} imply \eqref{eq3}
since
\begin{equation*}\begin{aligned}\int_S
\liminf_{n\to+\infty}f_n(s)&\mu(ds)-  \int_S \limsup_{n\to+\infty}
g_n(s)\mu(ds) 
\le \liminf_{n\to+\infty}\int_S (f_n(s)-g_n(s))\mu_n(ds)\\ &\le
\liminf_{n\to+\infty}\int_S
f_n(s)\mu_n(ds)-\liminf_{n\to+\infty}\int_S g_n(s)\mu_n(ds)
\\
&\le \liminf_{n\to+\infty}\int_S f_n(s)\mu_n(ds)  -\int_S
\limsup_{n\to+\infty}g_n(s)\mu(ds), \end{aligned}\end{equation*}
where the first inequality follows from \eqref{eq:fl1EF} and
\eqref{eq:fl1}, the second one holds since $-\infty
<\liminf_{n\to+\infty}\int_S g_n(s)\mu_n(ds)<+\infty$ in view of
\eqref{eq:sw1}, \eqref{eq:fl}, and $g_n\le f_n$, and the last
inequality holds because of \eqref{eq:sw1} and \eqref{eq1gn}.
\end{proof}

\begin{remark}The second inequality in (\ref{eq:sw1}) coincides with (\ref{eq3}),
when $f_n=g_n=g$, $n=1,2,\dots$.
\end{remark}

The following theorem extends Theorem~\ref{lemma2} to functions that can take negative values. 

\begin{theorem}\label{teor3}
Let $S$ be an arbitrary metric space, $\{\mu_n\}_{n\ge 1} \subset
\mathbb{P}(S)$ converge  weakly to $\mu\in \mathbb{P}(S)$, and
$\{f_n\}_{n\ge 1}$ be a sequence of measurable
$\overline{\mathbb{R}}$-valued functions on $S$. Then inequality
(\ref{eq3.1}) holds, if all the integrals in (\ref{eq3.1}) are
defined and there exists a sequence of measurable
${\mathbb{R}}$-valued functions $\{g_n\}_{n\ge 1}$ on $S$ such
that $f_n(s)\ge g_n(s)$, for all $n\ge 1$ and for all $s\in S$,
and
\begin{equation}\label{eq:sw2}
-\infty<\int_S \slim\limits_{n\to+\infty,\, s'\to s}g_n(s') \mu(ds)\le\liminf_{n\to+\infty}\int_S
 {g}_n(s)\mu_n(ds).
\end{equation}
\end{theorem}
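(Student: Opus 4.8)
The plan is to mirror the proof of Theorem~\ref{teor2}, using Theorem~\ref{lemma2} in place of Fatou's lemma for setwise convergence~\eqref{eq3} and replacing $\liminf_{n\to+\infty}f_n$ and $\limsup_{n\to+\infty}g_n$ throughout by their ``$s'\to s$'' versions $\ilim_{n\to+\infty,\,s'\to s}f_n(s')$ and $\slim_{n\to+\infty,\,s'\to s}g_n(s')$. First I would dispose of the trivial cases: if $\liminf_{n\to+\infty}\int_S f_n(s)\mu_n(ds)=+\infty$ or $\int_S\ilim_{n\to+\infty,\,s'\to s}f_n(s')\mu(ds)=-\infty$, then \eqref{eq3.1} is immediate. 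Otherwise, $f_n\ge g_n$ gives $\int_S g_n(s)\mu_n(ds)\le\int_S f_n(s)\mu_n(ds)$, and combining this with the second inequality in \eqref{eq:sw2} squeezes both $\int_S\slim_{n\to+\infty,\,s'\to s}g_n(s')\mu(ds)$ and $\liminf_{n\to+\infty}\int_S g_n(s)\mu_n(ds)$ strictly between $-\infty$ and $\liminf_{n\to+\infty}\int_S f_n(s)\mu_n(ds)<+\infty$, so both of these quantities are finite. Finiteness of the integral $\int_S\slim_{n\to+\infty,\,s'\to s}g_n(s')\mu(ds)$ also forces $\slim_{n\to+\infty,\,s'\to s}g_n(s')$ to be finite for $\mu$-a.e.\ $s\in S$, and the assumption $\int_S\ilim_{n\to+\infty,\,s'\to s}f_n(s')\mu(ds)>-\infty$ forces $\ilim_{n\to+\infty,\,s'\to s}f_n(s')>-\infty$ for $\mu$-a.e.\ $s\in S$.

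Next I would apply Theorem~\ref{lemma2} to the sequence $\{f_n-g_n\}_{n\ge 1}$, which consists of measurable nonnegative $\overline{\mathbb{R}}$-valued functions because each $g_n$ is $\mathbb{R}$-valued; this yields
\[
\int_S\ilim_{n\to+\infty,\,s'\to s}\bigl(f_n(s')-g_n(s')\bigr)\mu(ds)\le\liminf_{n\to+\infty}\int_S\bigl(f_n(s)-g_n(s)\bigr)\mu_n(ds).
\]
At every $s$ at which $\slim_{n\to+\infty,\,s'\to s}g_n(s')$ is finite --- hence $\mu$-a.s.\ --- superadditivity of the generalized lower limit gives the pointwise bound $\ilim_{n\to+\infty,\,s'\to s}f_n(s')-\slim_{n\to+\infty,\,s'\to s}g_n(s')\le\ilim_{n\to+\infty,\,s'\to s}(f_n(s')-g_n(s'))$, and, the subtrahend $\slim_{n\to+\infty,\,s'\to s}g_n(s')$ being $\mu$-integrable, this integrates to the corresponding inequality between $\mu$-integrals.

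After that the argument is exactly the chain of inequalities displayed in the proof of Theorem~\ref{teor2}: one adds $\int_S\slim_{n\to+\infty,\,s'\to s}g_n(s')\mu(ds)$ back in, invokes the inequality just obtained from Theorem~\ref{lemma2}, uses $\liminf_{n\to+\infty}\int_S(f_n-g_n)\mu_n\le\liminf_{n\to+\infty}\int_S f_n\mu_n-\liminf_{n\to+\infty}\int_S g_n\mu_n$ (valid because $\liminf_{n\to+\infty}\int_S g_n(s)\mu_n(ds)$ is finite), and finally cancels $\int_S\slim_{n\to+\infty,\,s'\to s}g_n(s')\mu(ds)$ against $\liminf_{n\to+\infty}\int_S g_n(s)\mu_n(ds)$ via the second inequality in \eqref{eq:sw2}, leaving exactly \eqref{eq3.1}.

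The only step that does not already appear in the proof of Theorem~\ref{teor2} is the verification that the pointwise inequality between the two generalized limits holds $\mu$-almost surely; this is precisely the place where the indeterminate form $\infty-\infty$ must be ruled out, and it is what motivates the a.s.\ finiteness arguments of the first paragraph and the use of the full strength of \eqref{eq:sw2}. All remaining manipulations are routine extended-real arithmetic identical to those in Theorem~\ref{teor2}, so I expect no further obstacle.
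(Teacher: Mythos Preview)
Your proposal is correct and follows essentially the same route as the paper's own proof: dispose of the trivial cases, derive finiteness of $\int_S\slim_{n,s'}g_n\,d\mu$ and of $\liminf_n\int_S g_n\,d\mu_n$, apply Theorem~\ref{lemma2} to $h_n=f_n-g_n$, and then run the same chain of inequalities as in Theorem~\ref{teor2}. Your more explicit discussion of why the pointwise inequality holds $\mu$-a.e.\ (ruling out the $\infty-\infty$ indeterminacy) matches the paper's appeal to \eqref{eq1gn1} and the right inequality in \eqref{eq:fll}.
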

\begin{proof}
If at least one of the  inequalities
\begin{equation}\label{eq:fll}
\liminf_{n\to+\infty}\int_S f_n(s)\mu_n(ds)<+\infty,\quad -\infty<\int_S \liminf_{n\to+\infty,\,s'\to s}f_n(s')\mu(ds).
\end{equation}
is violated then inequality (\ref{eq3.1}) holds. So, we assume (\ref{eq:fll}). The  left inequality in (\ref{eq:fll}) implies
\begin{equation}\label{eq:flg1}
\liminf_{n\to+\infty}\int_S g_n(s)\mu_n(ds)<+\infty.
\end{equation}
Inequalities (\ref{eq:sw2}) and (\ref{eq:flg1}) imply that
\begin{equation}\label{eq1gn1}
-\infty <\int_S \slim\limits_{n\to+\infty,\, s'\to s}g_n(s') \mu(ds)<+\infty.
\end{equation}
In view of (\ref{eq1gn1}) and the right inequality in (\ref{eq:fll}),
\begin{equation}\label{eq:k1}
\ilim\limits_{n\to+\infty,\, s'\to s} f_n(s')-\slim\limits_{n\to+\infty,\, s'\to s}g_n(s') \le \ilim\limits_{n\to+\infty,\, s'\to
s}\left[f_n(s')-g_n(s')\right]\quad \mu(ds)\mbox{-a.s.},
\end{equation}
and
\begin{equation}\label{eq:un1}
\int_S \ilim\limits_{n\to+\infty,\, s'\to s} f_n(s')\mu(ds)-\int_S \slim\limits_{n\to+\infty,\, s'\to s}g_n(s')\mu(ds)\le \int_S
\ilim\limits_{n\to+\infty,\, s'\to s} h_n(s')\mu(ds),
\end{equation}
where $h_n(s)= f_n(s)-g_n(s)$, $ s\in S$, $n=1,2,\dots\,.$

Let us apply Fatou's lemma for weak converging probabilities (see Theorem~\ref{lemma2}) to the  sequence $\{h_n\}_{n\ge 1}$ of nonnegative
$\overline{\R}$-valued measurable functions on $S$. Then
\begin{equation}\label{eq:un1a}
\int_S \ilim\limits_{n\to+\infty,\, s'\to s}h_n(s')\mu(ds)\le \liminf_{n\to+\infty}\int_S h_n(s)\mu_n(ds).
\end{equation}
Since $-\infty <\liminf_{n\to+\infty}\int_S
g_n(s)\mu_n(ds)<+\infty$ in view of (\ref{eq:sw2}),
(\ref{eq:flg1}), and $g_n\le f_n$, we have
\begin{equation}\label{eq:un2}
\liminf_{n\to+\infty}\int_S h_n(s)\mu_n(ds)
\le \liminf_{n\to+\infty}\int_S
f_n(s)\mu_n(ds)-\liminf_{n\to+\infty}\int_S {g}_n(s)\mu_n(ds).
\end{equation}
The following inequalities (\ref{eq:un1})--(\ref{eq:un2}) and
(\ref{eq1gn1}) imply (\ref{eq3.1}) since
\begin{equation*}\begin{aligned}
\int_S \ilim\limits_{n\to+\infty,\, s'\to s} f_n(s')\mu(ds) &
-\int_S \slim\limits_{n\to+\infty,\, s'\to s}g_n(s')\mu(ds)
\\
\le\liminf_{n\to+\infty}\int_S f_n(s)\mu_n(ds) &
-\liminf_{n\to+\infty}\int_S{g}_n(s) \mu_n(ds)
\\
\le \liminf_{n\to+\infty}\int_S f_n(s)\mu_n(ds) & -\int_S \slim\limits_{n\to+\infty,\, s'\to s}g_n(s')\mu(ds),
\end{aligned}\end{equation*}
where the first inequality follows from (\ref{eq:un1}) and
(\ref{eq:un2}), and the second inequality holds because of
(\ref{eq:sw2}) and (\ref{eq1gn1}).
\end{proof}

\begin{remark}\label{rem:bb}
Observe that, if the functions $f_n(s)\ge K>-\infty$ for any $s\in S$ and $n=1,2,\ldots,$ in Theorem~\ref{teor3}, then $g_n(s)=K$ for any $s\in S$ and
$n=1,2,\ldots,$ and assumption (\ref{eq:sw2}) holds. This fact also follows from Theorem~\ref{lemma2}.
\end{remark}

\begin{remark}
Example~\ref{exa1} demonstrates that assumptions (\ref{eq:sw1}) and (\ref{eq:sw2}) are essential for Theorems~\ref{teor2} and
\ref{teor3} respectively. 
\end{remark}

\begin{remark}
Theorem~\ref{lemma2} yields that, for uniformly bounded above functions $\{g_n\}_{n\ge 1}$, assumption (\ref{eq:sw2}) in Theorem~\ref{teor3}
is equivalent to 
\begin{equation}\label{eq:E}
\int_S \slim\limits_{n\to+\infty,\, s'\to s}g_n(s') \mu(ds)=\lim_{n\to+\infty}\int_S
 {g}_n(s)\mu_n(ds)>-\infty.
\end{equation}

Indeed, applying Fatou's lemma for uniformly bounded below functions $\{-g_n\}_{n\ge 1}$ (see Remark~\ref{rem:bb}) we obtain the inequality
\[
\int_S \slim\limits_{n\to+\infty,\, s'\to s}g_n(s') \mu(ds)\ge \limsup_{n\to+\infty}\int_S
 {g}_n(s)\mu_n(ds),
\]
that together with assumption (\ref{eq:sw2}) imply (\ref{eq:E}).
\end{remark}

\begin{corollary}\label{cor}
Let $S$ be an arbitrary metric space, $\{\mu_n\}_{n\ge 1} \subset
\mathbb{P}(S)$ converge  weakly to $\mu\in \mathbb{P}(S)$, and
$\{f_n\}_{n\ge 1}$ be a sequence of measurable
$\overline{\mathbb{R}}$-valued functions on $S$. Then inequality
(\ref{eq3.1}) holds, if there exists a bounded above measurable
${\mathbb{R}}$-valued function $g$ on $S$ such that $f_n(s)\ge
g(s)$ for all $n\ge 1$ and $s\in S$, and
\begin{equation}\label{eq:sw2aaa}
-\infty<\int_S \overline{g}(s)\mu(ds)=\lim_{n\to\infty}\int_S
g(s)\mu_n(ds).
\end{equation}
\end{corollary}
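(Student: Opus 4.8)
The plan is to obtain Corollary~\ref{cor} as the special case of Theorem~\ref{teor3} in which the minorizing sequence is constant in $n$: set $g_n:=g$ for every $n\ge 1$. Since $g$ is $\mathbb{R}$-valued and $f_n(s)\ge g(s)=g_n(s)$ for all $n\ge1$ and all $s\in S$, the only things left to verify are that the integrals in (\ref{eq3.1}) are defined and that hypothesis (\ref{eq:sw2}) holds for this choice of $\{g_n\}_{n\ge1}$.

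For the second point, I would first note that, because $g$ does not depend on $n$, the combined upper limit collapses to the ordinary one: $\slim_{n\to+\infty,\,s'\to s}g_n(s')=\slim_{s'\to s}g(s')=\overline{g}(s)$ for every $s\in S$. Hence (\ref{eq:sw2}) with $g_n\equiv g$ reads $-\infty<\int_S\overline{g}(s)\mu(ds)\le\liminf_{n\to+\infty}\int_S g(s)\mu_n(ds)$, and this is immediate from (\ref{eq:sw2aaa}): the latter states that $\lim_{n\to+\infty}\int_S g\,d\mu_n$ exists, equals $\int_S\overline{g}\,d\mu$, and exceeds $-\infty$, so in particular its lower limit coincides with $\int_S\overline{g}\,d\mu$ and (\ref{eq:sw2}) holds with equality. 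Equivalently, since the constant sequence $g_n\equiv g$ is uniformly bounded above, this is precisely the equivalence of (\ref{eq:sw2}) and (\ref{eq:E}) recorded in the remark following Theorem~\ref{teor3} on uniformly bounded above minorants.

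It remains to see that the integrals in (\ref{eq3.1}) are defined. Let $M\in\mathbb{R}$ be an upper bound for $g$. From $f_n\ge g$ one gets $f_n^-\le g^-$ pointwise, hence $\int_S f_n^-\,d\mu_n\le\int_S g^-\,d\mu_n=\int_S g^+\,d\mu_n-\int_S g\,d\mu_n\le M^+-\int_S g\,d\mu_n$; by (\ref{eq:sw2aaa}) the right-hand side is bounded in $n$, so $\int_S f_n^-\,d\mu_n<+\infty$ for all large $n$ and $\int_S f_n\,d\mu_n$ is therefore defined for all large $n$, which is all that is needed on the right-hand side of (\ref{eq3.1}). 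The left-hand integral $\int_S\ilim_{n\to+\infty,\,s'\to s}f_n(s')\,\mu(ds)$ is defined under the standing convention on the integrals appearing in the statements. With all hypotheses of Theorem~\ref{teor3} now in force, (\ref{eq3.1}) follows at once.

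There is essentially no serious obstacle here: the content of the corollary is already contained in Theorem~\ref{teor3}, and the only real work is to recognize that (\ref{eq:sw2aaa}) is nothing but (\ref{eq:sw2}) for a constant minorant, using that $\slim_{n\to+\infty,\,s'\to s}$ of an $n$-independent function is $\slim_{s'\to s}$ and that a lower limit equals the limit whenever the latter exists. The only mildly technical point is the verification that the integrals in (\ref{eq3.1}) are well defined, which the bound $f_n^-\le g^-$ together with (\ref{eq:sw2aaa}) settles for the $\mu_n$-integrals, the remaining $\mu$-integral being covered by the standing convention.
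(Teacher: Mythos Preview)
Your approach is correct and is exactly the one the paper intends: the corollary is stated without proof immediately after Theorem~\ref{teor3} and the remark on uniformly bounded above minorants precisely because setting $g_n\equiv g$ reduces hypothesis~(\ref{eq:sw2aaa}) to~(\ref{eq:sw2}) (equivalently~(\ref{eq:E})), after which Theorem~\ref{teor3} applies directly. Your extra verification that the $\mu_n$-integrals of $f_n$ are defined is careful but not strictly needed, since the paper's standing convention declares all integrals appearing in the statements of the theorems and the corollary to be defined.
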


\begin{remark}
If function $g$ from Corollary~\ref{cor} is upper semi-continuous
(in particular, continuous), then Assumption (\ref{eq:sw2aaa}) has
the following form:
\[
-\infty<\int_S {g}(s)\mu(ds)=\lim_{n\to\infty}\int_S
g(s)\mu_n(ds).
\]
\end{remark}

In the following example functions $\{f_n\}_{n\ge 1}$ are unbounded below and the assumptions of Theorem~\ref{teor3} are satisfied.
\begin{example}
{\rm Let $S=\mathbb{Q}$ be the set of rational numbers with the
metric $\rho(s_1,s_2)=|s_1-s_2|$, $s_1,s_2\in S$. We number the
elements of $S=\{x_i\}_{i\ge 1}$ and set $f_n=g_n=-n{\bf I}\{s\in
D_n\}$, where $D_n=\{x_1,x_2,\dots,x_n\}$, $n=1,2,\dots$. Note
that $\slim\limits_{n\to+\infty,\, s'\to s}g_n(s')=0$ for any
$s\in S$.

We consider an increasing sequence of natural numbers $\{k_n\}_{n\ge 1}\subset \mathbb{N}$ such that $\frac{k_n}{k_n+1}\notin D_n$, $n=1,2,\dots$. Let us
set
\[
\mu_n(B)={\bf I}\left\{\dfrac{k_n}{k_n+1}\in B\right\},\quad \mu(B)={\bf I}\left\{1\in B\right\},\quad B\in{\cal B}(S),\ n=1,2,\dots.
\]
The sequence of probability measures $\{\mu_n\}_{n\ge
1}\subset\P(S)$ converges weakly to $\mu\in \mathbb{P}(S)$.
Moreover, assumption (\ref{eq:sw2}) holds. Therefore,
Theorem~\ref{teor3} implies (\ref{eq3.1}).

We remark that $g(s)=-\infty$ for all $s\in S$ for any function
$g$ such that $g(s)\le f_n(s)$ for all $n=1,2,\ldots$ and for all
$s\in S$. Thus, ${\bar g}(s)=-\infty$ for all $s\in S$, assumption
(\ref{eq:sw2aaa}) does not hold, and Corollary~\ref{cor} is not
applicable to this example.}
\end{example}

\vspace{.3cm}
 {\bf Acknowledgements.} The authors thank Professor M. Z. Zgurovsky for initiating their research cooperation.
 The authors thank  Dr. Huizhen Janey Yu for her useful remarks
during the preparation of this work.
Research of the first
author was partially supported by NSF grants  CMMI-0900206 and
CMMI-0928490.


\begin{thebibliography}{99}


\bibitem{Bil} Billingsley, P. 1968. \textit{Convergence of Probability Measures}.
Jonh Wiley, New York.

\bibitem{Berb} Berberian, S.K. 1999. \textit{Fundamentals of Real Analysis}. Springer-Verlag, New York.


\bibitem{FKZ} Feinberg, E.~A., P.O.~Kasyanov, N.V.~Zadoianchuk. 2012. Average Cost Markov Decision Processes with
Weakly Continuous Transition Probabilities. \textit{Math. Oper. Res.}
 \textbf{37}(4) 591--607.

\bibitem{in}  Jaskiewicz, A., A.S. Nowak. 2006. Zero-Sum Ergodic Stochastic Games With Feller
Transition Probabilities. \textit{SIAM J. Control Optim.}
\textbf{45}(3) 773--789

\bibitem{Ro} Royden, H.L. 1968.
\textit{Real Analysis}. Second edition. Macmillan, New York.


\bibitem{Schal} Sch\"{a}l, M. 1993. Average
optimality in dynamic programming with general state space.
\textit{Math. Oper. Res}. \textbf{18}(1) 163--172.


\bibitem{Serfozo} Serfozo, R. 1982. Convergence of Lebesgue integrals with
varying measures. \textit{Sankhya: The Indian Journal of
Statistics} \textit{(Series A)} \textbf{44} 380--402.

\bibitem{Sh} Shiryaev, A.N. 1995.
\textit{Probability}. Second edition. Springer, New York.


\end{thebibliography}
\end{document}